\newtheorem{example}{Example}[section]
\newtheorem{definition}{Definition}[section]
\newtheorem{theorem}{Theorem}[section]
\begin{document}

\begin{frontmatter}

\title{A numerical approach for solving fractional optimal control problems using modified hat functions}

\author[Add:a,Add:b]{Somayeh Nemati} 
\ead{s.nemati@umz.ac.ir, s.nemati@ua.pt}

\author[Add:c]{Pedro M. Lima}
\ead{pedro.t.lima@ist.utl.pt}

\author[Add:b]{Delfim F. M. Torres\corref{corA}} 
\ead{delfim@ua.pt}
\cortext[corA]{Corresponding author.}


\address[Add:a]{Department of Mathematics, 
Faculty of Mathematical Sciences, 
University of Mazandaran, Babolsar, Iran}

\address[Add:b]{Center for Research and Development in Mathematics and Applications (CIDMA),\\ 
Department of Mathematics, University of Aveiro, Aveiro 3810-193, Portugal}

\address[Add:c]{Centro de Matem\'{a}tica Computacional e Estoc\'astica, 
Instituto Superior T\'{e}cnico,\\ 
Universidade de Lisboa, Av. Rovisco Pais, 1049-001 Lisboa, Portugal}


\begin{abstract}
We introduce a numerical method, based on modified hat functions,  
for solving a class of fractional optimal control problems. 
In our scheme, the control and the fractional derivative of the
state function are considered as linear combinations of the modified hat functions. 
The fractional derivative is considered in the Caputo sense while the 
Riemann--Liouville integral operator is used to give approximations 
for the state function and some of its derivatives. To this aim, we use 
the fractional order integration operational matrix of the modified 
hat functions and some properties of the Caputo derivative and  
Riemann--Liouville integral operators. Using results 
of the considered basis functions, solving the fractional optimal 
control problem is reduced to the solution of a system of nonlinear 
algebraic equations. An error bound is proved for the approximate 
optimal value of the performance index obtained by the proposed method.  
The method is then generalized for solving a class of 
fractional optimal control problems with inequality constraints. 
The most important advantages of our method are easy implementation, 
simple operations, and elimination of numerical integration. 
Some illustrative examples are considered to demonstrate 
the effectiveness and accuracy of the proposed technique.
\end{abstract}

\begin{keyword} 
Fractional optimal control problems \sep 
Modified hat functions \sep  
Operational matrix of fractional integration 
\sep Inequality constraints.

\medskip

\MSC[2010] 26A33 \sep 34A08 \sep 49M05. 
\end{keyword}
\end{frontmatter}


\section{Introduction}
\label{sec:1}

Optimal control theory allows us to choose control functions 
in a dynamical system to achieve a certain goal. The theory 
generalizes that of the classical calculus of variations 
and has found many applications in engineering, physics, economics, 
and life sciences \cite{MR3644954,MR3821582}. In 1996/97, 
motivated by nonconservative physical processes in mechanics,
the subject was extended to the case in which derivatives and 
integrals are understood as fractional operators of arbitrary order 
\cite{MyID:408,MR3221831}. 

Fractional optimal control is nowadays an important research area, 
allowing to apply the power of variational methods to real systems 
\cite{MR3443073,MR3822307,MyID:420}. It is an expanding branch of applied mathematics 
that attracts more and more attention, having been applied in transportation, electronic, 
chemical and biological systems \cite{MR3933377}.
Another applications of fractional optimal control problems are shown in \cite{MR3654793}.
Besides providing nontrivial generalizations of the classical theory, it opens doors to
new and interesting modern scientific problems 
\cite{MR3904404,Salati}. Given the complexity of the problems, 
numerical methods are indispensable \cite{MR3907974,MyID:426,MR3872489}. 

Recently, numerical methods based on modified hat functions have shown to
provide an effective way for solving fractional differential equations 
\cite{Nemati1,Nemati2}. Here, for the first time in the literature, 
we introduce such a method for solving a general class of fractional 
optimal control problems. More precisely, in this work we begin by 
considering the following fractional optimal control problem (FOCP):
\begin{equation}
\label{1.1}
\min~J=\int_0^{t_f}f(t,x(t),u(t))dt
\end{equation}
subject to the fractional dynamic system
\begin{equation}
\label{1.2}
D^\alpha x(t)=g\left(t,x(t),D^{\alpha_1}x(t),
D^{\alpha_2}x(t),\ldots ,D^{\alpha_k}x(t),u(t)\right),
\quad m-1<\alpha\leq m,\\
\end{equation}
and the initial conditions
\begin{equation}
\label{1.3}
x^{(i)}(0)=q_i,\quad i=0, 1, \ldots, m-1,
\end{equation}
where $\alpha>\alpha_k>\ldots>\alpha_2 >\alpha_1>0$, $D^{\alpha}$ 
denotes the (left) Caputo derivative of order $\alpha$, $x(\cdot)$ 
is the state function, $u(\cdot)$ is the control function, 
$J$ is called the performance index, and $f$ and $g$ are given 
continuous functions. The aim of problem \eqref{1.1}--\eqref{1.3} 
consists in finding the optimal state and control functions that minimize 
the performance index. In this paper, a numerical method based 
on the modified hat functions is introduced for solving problem 
\eqref{1.1}--\eqref{1.3} and then the method is developed 
for FOCPs with inequality constraints. Because of the importance 
of fractional optimal control in applications, 
the numerical solution of FOCPs is crucial
and our results can be very useful for many researchers.

The paper is organized as follows. In Section~\ref{sec:2}, 
some preliminaries of fractional calculus are reviewed 
and the basis functions and their properties are presented. 
A numerical method for solving the FOCP \eqref{1.1}--\eqref{1.3} 
is introduced in Section~\ref{sec:3}. In Section~\ref{sec:4}, 
an error estimate is proved for the proposed method. 
Section~\ref{sec:5} is devoted to the development 
of the method to solve constrained FOCPs. In Section~\ref{sec:6}, 
some FOCPs are considered and solved using the proposed method. 
Finally, a conclusion is given in Section~\ref{sec:7}.


\section{Preliminaries}
\label{sec:2}

In this section, a brief review on necessary definitions 
and properties of fractional calculus is presented. Furthermore, 
the modified hat functions and some of their properties are recalled.


\subsection{Preliminaries of fractional calculus}

We use fractional derivatives in the sense of Caputo.
They are defined via Riemann--Liouville fractional integral.

\begin{definition}[See, e.g., \cite{Podlubny}]
The Riemann--Liouville fractional integral operator with order 
$\alpha\geq 0$ of a given function $x(\cdot)$ is defined as 
\begin{equation*}
\begin{split}
&I^{\alpha}x(t)=\frac{1}{\Gamma(\alpha)}\int_0^t(t-s)^{\alpha-1}x(s)ds,\\
&I^0x(t)=x(t),
\end{split}
\end{equation*}
where $\Gamma(\cdot)$ is the Euler gamma function.
\end{definition}

\begin{definition}[See, e.g., \cite{Podlubny}]
The Caputo fractional derivative of order $\alpha>0$ 
of a function $x(\cdot)$ is defined as
\begin{equation*}
D^{\alpha}x(t)=\frac{1}{\Gamma(m-\alpha)}
\int_0^t(t-s)^{m-\alpha-1}x^{(m)}(s)ds,\quad m-1<\alpha \leq m.
\end{equation*}
\end{definition}

For $m-1<\alpha\leq m$, $m\in\mathbb{N}$, some useful properties of the 
Caputo derivative and Riemann--Liouville integral are as follows:
\begin{equation*}
D^\alpha I^\alpha x(t)=x(t),
\end{equation*}
\begin{equation}
\label{2.1}
I^\alpha D^\alpha x(t)=x(t)-\sum_{i=0}^{m-1}x^{(i)}(0)\frac{t^{i}}{i!},
\quad t>0,
\end{equation}
\begin{equation}
\label{2.2}
I^{\alpha-\beta}D^{\alpha}x(t)=D^\beta x(t)
-\sum_{i=\lceil\beta\rceil}^{m-1}x^{(i)}(0)
\frac{t^{i-\beta}}{\Gamma(i-\beta+1)},\quad 0< \beta<\alpha,
\quad t>0,
\end{equation}
where $\lceil \cdot \rceil$ is the ceiling function.


\subsection{Properties of the modified hat functions}

By considering an even integer number $n$, the interval $[0,t_f]$ is divided 
into $n$ subintervals $[ih,(i+1)h]$, $i=0,1,2,\ldots,n-1$, with $h=\frac{t_f}{n}$. 
Then the generalized modified hat functions consist of a set of $n+1$ linearly 
independent functions in $L^2[0,t_f]$ that are defined as follows 
\cite{Nemati1,Nemati2}:
\begin{equation*}
\psi_0(t)=\left\{
\begin{array}{ll}
\frac{1}{2h^2}(t-h)(t-2h),&0\leq t\leq 2h,\\
&\\
0,&\text{otherwise};
\end{array}\right.
\end{equation*}
if $i$ is odd and $1\leq i\leq n-1$,
\begin{equation*}
\psi_i(t)=\left\{
\begin{array}{ll}
\frac{-1}{h^2}(t-(i-1)h)(t-(i+1)h),&(i-1)h\leq t\leq(i+1)h,\\
&\\
0,&\text{otherwise};
\end{array}\right.
\end{equation*}
if $i$ is even and $2\leq i\leq n-2$,
\begin{equation*}
\psi_i(t)
=\left\{
\begin{array}{ll}
\frac{1}{2h^2}(t-(i-1)h)(t-(i-2)h),&(i-2)h\leq t\leq ih,\\
&\\
\frac{1}{2h^2}(t-(i+1)h)(t-(i+2)h),&ih\leq t\leq(i+2)h,\\
&\\
0,&\text{otherwise},
\end{array}\right.
\end{equation*}
and
\begin{equation*}
\psi_n(t)=\left\{
\begin{array}{ll}
\frac{1}{2h^2}(t-(t_f-h))(t-(t_f-2h)),&t_f-2h\leq t\leq t_f,\\
&\\
0,&\text{otherwise}.
\end{array}\right.
\end{equation*}
These functions satisfy some interesting properties, which make 
them very useful. Some of these properties are as follows:
\begin{equation*}
\psi_i(jh)=\left\{\begin{array}{ll}1,&i=j,\\
&\\
0,&i\neq j,
\end{array}\right.
\end{equation*}
\begin{equation*}
\sum_{i=0}^n \psi_i(t)=1,
\end{equation*}
\begin{equation}
\label{2.3}
\int_0^{t_f}\psi_i(t)dt
=\left\{
\begin{array}{ll}
\frac{h}{3},&\text{if $i=0,n$},\\
&\\
\frac{4h}{3}&\text{if $i$ is odd and $1\leq i\leq n-1$},\\
&\\
\frac{2h}{3},&\text{if $i$ is even and $2\leq i\leq n-2$}.
\end{array}\right.
\end{equation}

Any function $x(\cdot)\in L^2[0,t_f]$ can be approximated 
using the modified hat functions as
\begin{equation}
\label{2.4}
x(t)\simeq x_n(t)=\sum_{i=0}^n a_i \psi_i(t)=A^T \Psi(t),
\end{equation}
where
\begin{equation}
\label{2.5}
\Psi(t)=[\psi_0(t),\psi_1(t),\ldots,\psi_n(t)]^T,
\end{equation}
and
\begin{equation*}
A=[a_0,a_1,\ldots,a_n]^T,
\end{equation*}
with $a_i=x(ih)$.

Let $\Psi(\cdot)$ be the modified hat functions basis vector 
given by \eqref{2.5} and $\alpha>0$. Then,
\begin{equation}
\label{2.6}
I^\alpha\Psi(t)\simeq P^{(\alpha)}\Psi(t),
\end{equation}
where $P^{(\alpha)}$ is a matrix of dimension $(n+1)\times(n+1)$  
called the operational matrix of fractional integration of order $\alpha$. 
This matrix is given (see \cite{Nemati1,Nemati2}) as  
\begin{equation}
\label{2.7}
P^{(\alpha)}
=\frac{h^\alpha}{2\Gamma(\alpha+3)}\left[
\begin{array}{cccccccc}
0 & \beta_1  &\beta_2& \beta_3 &\beta_4 &\ldots & \beta_{n-1}& \beta_n \\
0 & \eta_0 & \eta_1 & \eta_2&\eta_3&\ldots &\eta_{n-2} & \eta_{n-1} \\
0 & \xi_{-1} &\xi_0 & \xi_1&\xi_2&\ldots &\xi_{n-3}& \xi_{n-2}\\
0 & 0 & 0 & \eta_0 & \eta_1 & \ldots &\eta_{n-4} & \eta_{n-3}\\
0 & 0 & 0 &\xi_{-1} &\xi_0 & \ldots &\xi_{n-5} & \xi_{n-4}\\
\vdots & \vdots &\vdots& \vdots& \vdots  &  &\vdots  & \vdots\\
0 & 0 & 0 & 0&0&\ldots &\eta_0 & \eta_1 \\
0 & 0 & 0 &0 &0 & \ldots &\xi_{-1} & \xi_0\\
\end{array}
\right]
\end{equation}
with
\begin{equation*}
\begin{split}
&\beta_1=\alpha(3+2\alpha),\\
&\beta_i=i^{\alpha+1}(2i-6-3\alpha)+2i^\alpha(1+\alpha)(2+\alpha)
-(i-2)^{\alpha+1}(2i-2+\alpha), \quad i=2,3,\ldots,n,\\
&\eta_0=4(1+\alpha),\\
&\eta_i=4[(i-1)^{\alpha+1}(i+1+\alpha)-(i+1)^{\alpha+1}(i-1-\alpha)],~~i=1,2,\ldots,n-1,\\
&\xi_{-1}=-\alpha,\\
&\xi_0=2^{\alpha+1}(2-\alpha),\\
&\xi_1=3^{\alpha+1}(4-\alpha)-6(2+\alpha),\\
&\xi_i=(i+2)^{\alpha+1}(2i+2-\alpha)-6i^{\alpha+1}(2+\alpha)-(i-2)^{\alpha+1}(2i-2+\alpha),
\quad i=2,3,\ldots,n-2.
\end{split}
\end{equation*}


\section{Numerical solution of problem \eqref{1.1}--\eqref{1.3}}
\label{sec:3}

In this section, we introduce a new numerical scheme  
for solving the optimal control problem \eqref{1.1}--\eqref{1.3}.
Our method is based on the properties of the modified hat functions.
We begin by expanding the variable $t$ in terms of the modified hat functions. 
Using \eqref{2.4}, this variable can then be rewritten as
\begin{equation}
\label{3.1}
t=\Theta^T\Psi(t),
\end{equation}
where
\begin{equation}
\Theta=[0,h,2h,\ldots ,t_f]^T.
\end{equation}
Let us suppose that
\begin{equation}
\label{3.2}
D^{\alpha}x(t)=A^T\Psi(t)
\end{equation}
and
\begin{equation}
\label{3.3}
u(t)\simeq u_n(t)=U^T\Psi(t),
\end{equation}
where
\begin{equation*}
A=[a_0,a_1,\ldots,a_n]^T,\quad U=[u_0,u_1,\ldots,u_n]^T,
\end{equation*}
such that the elements of $A$ and $U$ are unknown. By utilizing \eqref{2.2}, 
\eqref{2.6}, \eqref{3.2}, and the initial conditions given in \eqref{1.3}, 
for $s=1,2,\ldots,k$, we obtain that
\begin{equation}
\label{3.4}
\begin{split}
D^{\alpha_s}x(t)
&=I^{\alpha-\alpha_s}(D^{\alpha}x(t))
+\sum_{i=\lceil\alpha_s\rceil}^{m-1}x^{(i)}(0)
\frac{t^{i-\alpha_s}}{\Gamma(j-\alpha_s+1)}\\
&\simeq A^TI^{\alpha-\alpha_s}\Psi(t)
+\sum_{i=\lceil\alpha_s\rceil}^{m-1}
q_i\frac{t^{i-\alpha_s}}{\Gamma(i-\alpha_s+1)}\\
&\simeq (A^TP^{(\alpha-\alpha_s)}+D_s^T)\Psi(t)\\
&=X_s^T\Psi(t),
\end{split}
\end{equation}
where we have used the notations
\begin{equation*}
\sum_{i=\lceil\alpha_s\rceil}^{m-1}
q_i\frac{t^{i-\alpha_s}}{\Gamma(i-\alpha_s+1)}
\simeq D_s^T\Psi(t),\quad X_s=(A^TP^{(\alpha-\alpha_s)}+D_s^T)^T
\end{equation*}
with
\begin{equation*}
X_s=[x_{s,0},x_{s,1},\ldots, x_{s,n}]^T.
\end{equation*}
Furthermore, the state function can be approximated 
using \eqref{2.1}, \eqref{2.6}, and \eqref{3.2}, as
\begin{equation}
\label{3.5}
x(t)\simeq x_n(t)=X^T\Psi(t),
\end{equation}
where we have used
\begin{equation}
\label{3.10}
\sum_{i=0}^{m-1}q_i\frac{t^{i}}{i!}
\simeq D^T\Psi(t),\quad X=(A^TP^{(\alpha)}+D^T)^T
\end{equation}
with
\begin{equation*}
X=[x_{0},x_{1},\ldots,x_{n}]^T.
\end{equation*}
We rewrite the elements of the vectors $X$ and $X_s$, 
$s=1,2,\ldots,k$, in terms of the elements of the vector $A$. 
To this aim, suppose that
\begin{equation*}
\begin{split}
&P^{(\alpha-\alpha_s)}=[p_{ij}^s]_{i,j=0,1,\ldots,n},
\quad \quad s=1,2,\ldots,k,\\
&P^{(\alpha)}=[p_{ij}]_{i,j=0,1,\ldots,n}.
\end{split}
\end{equation*}
Taking into consideration \eqref{2.7}, we have
\begin{equation*}
\begin{split}
&p_{i0}=p_{i0}^{s}=0,\quad \text{for $i=0,1,2,\ldots,n$},\\
&p_{ij}=p_{ij}^{s}=0,\quad \text{for $j=1,3,5,\ldots,n-1,$ $i=j+2,j+3,\ldots,n$},\\
&p_{ij}=p_{ij}^{s}=0,\quad \text{for $j=2,4,6,\ldots,n,$ $i=j+1,j+2,\ldots,n$}.
\end{split}
\end{equation*}
Therefore, the entries $x_{j}$ and $x_{s,j}$ 
can be written in details as
\begin{equation*}
\begin{split}
&x_{0}=q_0,\\
&x_{j}=\sum_{i=0}^{j+1}a_ip_{ij}+\sum_{i=0}^{m-1}
q_i\frac{(jh)^{i}}{i!},\quad j=1,3,\ldots, n-1,\\
&x_{j}=\sum_{i=0}^{j}a_ip_{ij}+\sum_{i=0}^{m-1}
q_i\frac{(jh)^{i}}{i!},\quad j=2,4,\ldots, n,\\
\end{split}
\end{equation*}
\begin{equation*}
\begin{split}
&x_{s,j}=\sum_{i=0}^{j+1}a_ip_{ij}^s+\sum_{i=\lceil\alpha_s\rceil}^{m-1}
q_i\frac{(jh)^{i-\alpha_s}}{\Gamma(i-\alpha_s+1)},\quad j=1,3,\ldots, n-1,\\
&x_{s,j}=\sum_{i=0}^{j}a_ip_{ij}^s+\sum_{i=\lceil\alpha_s\rceil}^{m-1}
q_i\frac{(jh)^{i-\alpha_s}}{\Gamma(i-\alpha_s+1)},\quad j=2,4,\ldots, n.\\
\end{split}
\end{equation*}
We employ the approximations given above in order to obtain 
an approximation of the performance index. For the function 
$f:\mathbb{R}^3\rightarrow \mathbb{R}$, using \eqref{2.4}, 
\eqref{3.1}, \eqref{3.3}, and \eqref{3.5}, we can write
\begin{equation}
\label{3.6}
\begin{split}
f(t,x(t),u(t))&\simeq \sum_{i=0}^n f(ih,x(ih),u(ih))\psi_i(t)\\
&\simeq \sum_{i=0}^n f(ih,x_{i},u_i)\psi_i(t)\\
&=f(\Theta,X,U)\Psi(t),\\
\end{split}
\end{equation}
where
\begin{equation*}
f(\Theta,X,U)=[f(0,x_{0},u_0),f(h,x_{1},u_1),\ldots,f(t_f,x_{n},u_n)].
\end{equation*}
By substituting \eqref{3.6} into the performance index and employing 
\eqref{2.3}, we obtain that
\begin{equation}
\label{index}
J\simeq\int_0^{t_f}f(\Theta,X,U)\Psi(t)dt
=f(\Theta,X,U)\int_0^{t_f}\Psi(t)dt=f(\Theta,X,U)L,
\end{equation}
where
\begin{equation}
L=\frac{h}{3}\left[1,4,2,4,2,\ldots ,2,4,1\right]^T.
\end{equation}
In a similar way, taking into account \eqref{2.4}, \eqref{3.1}, 
and \eqref{3.3}--\eqref{3.5} for the function 
$g:\mathbb{R}^{k+3}\rightarrow \mathbb{R}$, we have
\begin{equation}
\label{3.7}
\begin{split}
g(t,x(t),D^{\alpha_1}x(t),\ldots ,D^{\alpha_k}x(t),u(t))
&\simeq \sum_{i=0}^n g\left(ih,x(ih),D^{\alpha_1}x(ih),\right.\\
&\left.\quad \ldots, D^{\alpha_k}x(ih),u(ih)\right)\psi_i(t)\\
&\simeq \sum_{i=0}^n g\left(ih,x_{i},x_{1,i},
\ldots, x_{k,i},u_i\right)\psi_i(t)\\
&=g(\Theta,X,X_1,\ldots, X_k,U)\Psi(t),
\end{split}
\end{equation}
where
\begin{multline*}
g\left(\Theta,X,X_1,\ldots, X_k,U\right)\\
=\left[g(0,x_{0},x_{1,0},\ldots,
x_{k,0},u_0),g(h,x_{1},x_{1,1},\ldots,x_{k,1},u_1),
\ldots,g\left(t_f,x_{n},x_{1,n},\ldots,x_{k,n},u_n\right)\right].
\end{multline*}
Hence, using \eqref{3.2} and \eqref{3.7}, the dynamic 
system given by \eqref{1.2} is reduced to
\begin{equation}
\label{3.8}
A^T-g(\Theta,X,X_1,\ldots, X_k,U)=0.
\end{equation}
Finally, according to the Lagrange multiplier method 
for minimizing \eqref{index} subject to the conditions 
given in \eqref{3.8}, we define:
\begin{equation*}
J^*[A,U,\lambda]=f(\Theta,X,U)L
+\left(A^T-g(\Theta,X,X_1,\ldots, X_k,U)\right)\lambda,
\end{equation*}
where
\begin{equation*}
\lambda=[\lambda_0,\lambda_1,\ldots,\lambda_n]^T
\end{equation*}
is the unknown Lagrange multiplier. 
The necessary optimality conditions are as follows:
\begin{equation}
\label{3.9}
\frac{\partial J^*}{\partial A}=0,
\quad \frac{\partial J^*}{\partial U}=0, 
\quad \frac{\partial J^*}{\partial \lambda}=0.
\end{equation}
System \eqref{3.9} includes $3(n+1)$ nonlinear equations with $3(n+1)$ 
unknown parameters, which are the elements of $A$, $U$, and $\lambda$. 
By solving this system, approximations of the optimal control and 
corresponding state functions are given by \eqref{3.3} and \eqref{3.5}, 
respectively. Furthermore, an approximation of the optimal value 
of the performance index is given by \eqref{index}. The speed of our 
method depends on the speed of solving system \eqref{3.9}. Therefore, 
the form of the system is very important. Here, we detail the closed 
forms for system \eqref{3.9}. For simplicity, suppose that 
$g:=g(t,x(t),u(t))$. Then, we have
\begin{equation*}
\begin{split}
\frac{\partial J^*}{\partial{a_0}}
&=\frac{4h}{3}\sum_{j=0}^{\frac{n}{2}-1}p_{0(2j+1)}
f_{x_{2j+1}}((2j+1)h,x_{2j+1},u_{2j+1})+\frac{2h}{3}
\sum_{j=1}^{\frac{n}{2}-1}p_{0(2j)}f_{x_{2j}}(2jh,x_{2j},u_{2j})\\
&\quad +\frac{h}{3}p_{0n}f_{x_n}(t_f,x_n,u_n)+\lambda_0
-\sum_{j=1}^n\lambda_j p_{0j}g_{x_j}(jh,x_j,u_j)=0,
\end{split}
\end{equation*}
\begin{equation*}
\begin{split}
\frac{\partial J^*}{\partial {a_i}}
&=\frac{4h}{3}\sum_{j=\frac{i-1}{2}}^{\frac{n}{2}-1}p_{i(2j+1)}
f_{x_{2j+1}}((2j+1)h,x_{2j+1},u_{2j+1})+\frac{2h}{3}
\sum_{j=\frac{i+1}{2}}^{\frac{n}{2}-1}p_{i(2j)}f_{x_{2j}}(2jh,x_{2j},u_{2j})\\
&\quad +\frac{h}{3}p_{in}f_{x_n}(t_f,x_n,u_n)+\lambda_i
-\sum_{j=i}^n\lambda_j p_{ij}g_{x_j}(jh,x_j,u_j)=0,
\quad i=1,3,\ldots,n-1,
\end{split}
\end{equation*}
\begin{equation*}
\begin{split}
\frac{\partial J^*}{\partial {a_i}}
&=\frac{4h}{3}\sum_{j=\frac{i}{2}-1}^{\frac{n}{2}-1}p_{i(2j+1)}
f_{x_{2j+1}}((2j+1)h,x_{2j+1},u_{2j+1})+\frac{2h}{3}
\sum_{j=\frac{i}{2}}^{\frac{n}{2}-1}p_{i(2j)}
f_{x_{2j}}(2jh,x_{2j},u_{2j})\\
&+\frac{h}{3}p_{in}f_{x_n}(t_f,x_n,u_n)+\lambda_i
-\sum_{j=i-1}^n\lambda_j p_{ij}g_{x_j}(jh,x_j,u_j)=0,
\quad i=2,4,\ldots,n,
\end{split}
\end{equation*}
\begin{equation*}
\frac{\partial J^*}{\partial {u_i}}
=\frac{h}{3}f_{u_i}(ih,x_i,u_i)
-\lambda_i g_{u_i}(ih,x_i,u_i)=0,
\quad i=0,n,
\end{equation*}
\begin{equation*}
\frac{\partial J^*}{\partial {u_i}}
=\frac{4h}{3}f_{u_i}(ih,x_i,u_i)
-\lambda_i g_{u_i}(ih,x_i,u_i)=0,
\quad i=1,3,\ldots,n-1,
\end{equation*}
\begin{equation*}
\frac{\partial J^*}{\partial {u_i}}
=\frac{2h}{3}f_{u_i}(ih,x_i,u_i)
-\lambda_i g_{u_i}(ih,x_i,u_i)=0,
\quad i=2,4,\ldots,n-2,
\end{equation*}
\begin{equation*}
\frac{\partial J^*}{\partial {\lambda_i}}
=a_i-g(ih,x_i,u_i)=0,
\quad i=0,1,2,\ldots,n.
\end{equation*}
As it can be seen, the partial derivatives of $f$ and $g$ with respect 
to the elements of $X$ and $U$ are needed to provide the final system. 
Symbolic computation can be used to produce these partial derivatives.
Note that our method reduces the fractional optimal control problem 
\eqref{1.1}--\eqref{1.3} to the solution of a system of nonlinear 
algebraic equations. In some applications, solving the resulting system 
can be sensitive to the choice of initial values and may result
in a singular Jacobian matrix. In that case, some extra care in choosing
the initial values is needed.


\section{Error estimate}
\label{sec:4}

In this section, we present an error estimate for the method proposed 
in the previous section. To this end, first let us recall 
the following theorem from \cite{Nemati1}.

\begin{theorem}[See \cite{Nemati1}]
\label{th1} 
If a function $y(\cdot)\in C^3[0,t_f]$ is approximated 
using the set of modified hat functions by 
$y_n(t)=\sum\limits_{i=0}^ny(ih)\psi(t)$, then
\begin{equation*}
|y(t)-y_n(t)|=O(h^3).
\end{equation*}
\end{theorem}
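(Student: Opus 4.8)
\emph{Proof idea.} The plan is to recognise $y_n$ as a piecewise quadratic interpolant of $y$ and then invoke the classical polynomial interpolation remainder. Since $n$ is even, $[0,t_f]$ splits into the $\tfrac{n}{2}$ disjoint double subintervals $I_j=[2jh,(2j+2)h]$, $j=0,1,\ldots,\tfrac{n}{2}-1$. The first thing I would check is that on a fixed $I_j$ the only basis functions not identically zero are $\psi_{2j},\psi_{2j+1},\psi_{2j+2}$, and that each of these, restricted to $I_j$, is a polynomial of degree $\le 2$ (read off directly from the defining formulas) satisfying $\psi_{2j+r}(2jh+sh)=\delta_{rs}$ for $r,s\in\{0,1,2\}$ by the interpolation property $\psi_i(\ell h)=\delta_{i\ell}$ recalled above. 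Hence $\psi_{2j},\psi_{2j+1},\psi_{2j+2}$ are precisely the Lagrange cardinal quadratics for the nodes $2jh,(2j+1)h,(2j+2)h$, so that $y_n|_{I_j}$ equals the unique polynomial $p_j$ of degree $\le 2$ interpolating $y$ at those three nodes. (The two-branch formula for the even-indexed $\psi_{2j}$ is exactly what makes this work simultaneously on $I_{j-1}$ and $I_j$, and $\psi_0,\psi_n$ take care of the endpoints $0$ and $t_f$.)

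With $y_n$ identified on each $I_j$, I would next apply the standard error formula for quadratic interpolation: since $y\in C^3[0,t_f]$, for each $t\in I_j$ there is $\xi_t\in I_j$ with
\[
y(t)-y_n(t)=\frac{y'''(\xi_t)}{3!}\,(t-2jh)\bigl(t-(2j+1)h\bigr)\bigl(t-(2j+2)h\bigr).
\]
Each factor of the product has absolute value at most $2h$ on $I_j$, so
\[
|y(t)-y_n(t)|\le \frac{(2h)^3}{6}\,\max_{[0,t_f]}|y'''|=\frac{4}{3}\,h^3\max_{[0,t_f]}|y'''| .
\]
As the right-hand side depends on neither $j$ nor $t$, the bound holds on all of $[0,t_f]$, giving $|y(t)-y_n(t)|=O(h^3)$ with implied constant $\tfrac{4}{3}\max_{[0,t_f]}|y'''|$.

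I expect the only delicate point to be the first step: confirming that the piecewise definitions of the $\psi_i$ genuinely collapse to the Lagrange cardinal basis on every double subinterval, consistently across the shared even nodes, and that the boundary functions behave correctly. This is elementary but needs careful case-checking (odd $i$, even interior $i$, and $i=0,n$). Once it is settled, the remainder is just the textbook quadratic-interpolation bound combined with the crude estimate $|t-\text{node}|\le 2h$; a slightly sharper constant could be obtained by maximising $\bigl|(t-2jh)(t-(2j+1)h)(t-(2j+2)h)\bigr|$ over $I_j$, but that refinement is unnecessary for an $O(h^3)$ conclusion.
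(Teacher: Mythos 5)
Your proof is correct: the identification of $y_n$ on each double subinterval $[2jh,(2j+2)h]$ with the quadratic Lagrange interpolant of $y$ at the nodes $2jh,(2j+1)h,(2j+2)h$ is exactly what the two-branch definition of the even-indexed $\psi_i$ (and the one-sided $\psi_0,\psi_n$) is designed to achieve, and the $C^3$ interpolation remainder then yields the uniform $O(h^3)$ bound (your constant $\tfrac{4}{3}\max|y'''|$ is a harmless overestimate, since $|t-(2j+1)h|\leq h$ would sharpen it). Note that the present paper states this theorem without proof, citing \cite{Nemati1}; the argument there is essentially this same piecewise-quadratic interpolation-error argument, so your route coincides with the intended one.
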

According to Theorem~\ref{th1}, if the control function 
$u(\cdot)$ belongs to the space $C^3[0,t_f]$, 
then we will have
\begin{equation*}
|u(t)-u_n(t)|=O(h^3),
\end{equation*}
where $u_n(t)$ has been given in \eqref{3.3}. For obtaining 
an error estimate for the state function $x(\cdot)$, we suppose 
that $D^\alpha x(\cdot)\in C^3[0,t_f]$ and is approximated 
using \eqref{3.2}. Also, suppose that $D^T\Psi(t)$ is the 
approximation of function $\sum\limits_{i=0}^{m-1}q_i\frac{t^{i}}{i!}$. 
Then, using Theorem~\ref{th1}, we have
\begin{equation}
\label{4.6}
\begin{split}
&\left|D^\alpha x(t)-A^T\Psi(t)\right|=O(h^3),\\
&\left|\sum_{i=0}^{m-1}q_i\frac{t^{i}}{i!}-D^T\Psi(t)\right|=O(h^3).
\end{split}
\end{equation}
Using \eqref{2.1} and the initial conditions in \eqref{1.3}, we get
\begin{equation}
\label{4.5}
x(t)=I^{\alpha}(D^\alpha x(t))+\sum_{i=0}^{m-1}q_i\frac{t^{i}}{i!}.
\end{equation}
Now, we consider $x_n(t)=X^T\Psi(t)$ as an approximation of the state 
function $x(\cdot)$, where $X$ is given by \eqref{3.10}. Then, 
by neglecting the error of the operational matrix and utilizing 
\eqref{3.5}, \eqref{4.6}, and \eqref{4.5}, we obtain that
\begin{equation*}
\begin{split}
\left|x(t)-x_n(t)\right|
&=\left|I^\alpha (D^\alpha x(t))+\sum_{i=0}^{m-1}
q_i\frac{t^{i}}{i!}-A^TP^{\alpha}\Psi(t)-D^T\Psi(t)\right|\\
&\leq \left|I^\alpha( D^\alpha x(t))-A^TP^{\alpha}\Psi(t)\right|
+\left|\sum_{i=0}^{m-1}q_i\frac{t^{i}}{i!}-D^T\Psi(t)\right|\\
&=O(h^3).
\end{split}
\end{equation*}

To give an error estimate for the performance index obtained 
by the method we propose in Section~\ref{sec:3}, 
we make use of the following theorem.

\begin{theorem}[Composite Simpson's rule \cite{Burden}]
\label{th2}
Let $y(\cdot)\in C^4[0,t_f]$, $n$ be even, $h=t_f/n$, and $t_j=jh$, 
$j=0,1,\ldots,n$. Then, the composite Simpson's rule can be written 
with its corresponding error as
\begin{equation*}
\int_0^{t_f}y(t)dt=\frac{h}{3}\left[y(0)
+2\sum_{j=1}^{(n/2)-1}y(t_{2j})
+4\sum_{j=1}^{n/2}y(t_{2j-1})+y(t_f)\right]
-\frac{t_f}{180}h^4y^{(4)}(\mu),
\end{equation*}
where $\mu\in(0,t_f)$.
\end{theorem}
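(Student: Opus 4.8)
The plan is to derive the composite rule from $n/2$ applications of the elementary three‑point Simpson rule and then to collapse the $n/2$ local error terms into a single one by a mean value argument. Partition $[0,t_f]$ into the double subintervals $[t_{2k},t_{2k+2}]$ for $k=0,1,\dots,\frac{n}{2}-1$, each of length $2h$ with midpoint $t_{2k+1}$. The core ingredient, which I would establish first, is the elementary estimate
\begin{equation*}
\int_{t_{2k}}^{t_{2k+2}}y(t)\,dt
=\frac{h}{3}\bigl[y(t_{2k})+4y(t_{2k+1})+y(t_{2k+2})\bigr]
-\frac{h^{5}}{90}\,y^{(4)}(\xi_k),\qquad \xi_k\in(t_{2k},t_{2k+2}).
\end{equation*}

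This elementary estimate is the step I expect to be the main obstacle, because a crude Lagrange‑interpolation bound on three nodes only yields a local error of order $h^{4}$, whereas the claim needs order $h^{5}$ and the precise constant $-1/90$; extracting these requires exploiting the symmetry of the three nodes about the midpoint. Concretely, I would expand $y$ in a Taylor polynomial of degree three about $t_{2k+1}$ with remainder, integrate term by term over $[t_{2k},t_{2k+2}]$ so that the odd‑power terms vanish, and apply the weighted mean value theorem for integrals to the remainder to obtain $2h\,y(t_{2k+1})+\frac{h^{3}}{3}y''(t_{2k+1})+\frac{h^{5}}{60}y^{(4)}(\xi_k')$; then substitute the centered difference $y''(t_{2k+1})=h^{-2}\bigl[y(t_{2k})-2y(t_{2k+1})+y(t_{2k+2})\bigr]-\frac{h^{2}}{12}y^{(4)}(\xi_k'')$ and combine the two fourth‑derivative contributions, with $\frac{1}{60}-\frac{1}{36}=-\frac{1}{90}$ producing the stated constant, merging $\xi_k'$ and $\xi_k''$ into a single $\xi_k$ via the intermediate value theorem for the continuous function $y^{(4)}$. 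Alternatively, one can invoke the Peano kernel representation of the quadrature error together with the fact that the Simpson kernel does not change sign; either way, the hypothesis $y\in C^{4}[0,t_f]$ is exactly what legitimizes these mean value and weighted mean value steps.

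Granting the elementary estimate, I would sum over $k=0,\dots,\frac{n}{2}-1$. On the quadrature side, counting node multiplicities gives the stated weights: $y(0)$ and $y(t_f)$ each appear once, every interior even node $t_{2j}$ (for $1\le j\le\frac{n}{2}-1$) appears once as a right endpoint and once as a left endpoint, hence with weight $2$, and every odd node $t_{2j-1}$ (for $1\le j\le\frac{n}{2}$) appears once as a midpoint with weight $4$; multiplying the collected sum by $h/3$ reproduces $\frac{h}{3}\bigl[y(0)+2\sum_{j=1}^{n/2-1}y(t_{2j})+4\sum_{j=1}^{n/2}y(t_{2j-1})+y(t_f)\bigr]$. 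On the error side, I would turn $-\frac{h^{5}}{90}\sum_{k=0}^{n/2-1}y^{(4)}(\xi_k)$ into a single evaluation: since $y^{(4)}$ is continuous on $[0,t_f]$, its arithmetic mean $\frac{2}{n}\sum_{k}y^{(4)}(\xi_k)$ lies between $\min y^{(4)}$ and $\max y^{(4)}$ and therefore equals $y^{(4)}(\mu)$ for some $\mu\in(0,t_f)$ by the intermediate value theorem, so that $\sum_{k}y^{(4)}(\xi_k)=\frac{n}{2}\,y^{(4)}(\mu)$. Hence the total error equals $-\frac{h^{5}}{90}\cdot\frac{n}{2}\,y^{(4)}(\mu)=-\frac{nh}{180}\,h^{4}y^{(4)}(\mu)$, and substituting $nh=t_f$ gives $-\frac{t_f}{180}h^{4}y^{(4)}(\mu)$, which is the assertion.
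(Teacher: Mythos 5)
The paper itself contains no proof of this theorem: it is quoted directly from the textbook \cite{Burden}, so there is no in-paper argument to compare against, and your proposal must stand on its own as a reconstruction of the classical proof. Its global structure is sound: the elementary Simpson rule on each double panel with local error $-\frac{h^{5}}{90}y^{(4)}(\xi_k)$, the multiplicity count producing the weights $1,4,2,\dots,2,4,1$, and the consolidation $\sum_{k}y^{(4)}(\xi_k)=\frac{n}{2}\,y^{(4)}(\mu)$ — which is legitimate because there the average is a convex combination with equal positive weights, so the intermediate value theorem applies — followed by $nh=t_f$ to get $-\frac{t_f}{180}h^{4}y^{(4)}(\mu)$.

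The one step that does not work as you describe it is the merger of $\xi_k'$ and $\xi_k''$ in the local estimate. After the Taylor expansion and the centered-difference substitution, the local error is $\frac{h^{5}}{60}y^{(4)}(\xi_k')-\frac{h^{5}}{36}y^{(4)}(\xi_k'')$, a combination whose coefficients have \emph{opposite} signs. Such a combination need not lie in the range of $-\frac{h^{5}}{90}y^{(4)}$: writing $m=\min y^{(4)}$ and $M=\max y^{(4)}$, it can be as small as $\frac{h^{5}}{60}m-\frac{h^{5}}{36}M$, which lies below $-\frac{h^{5}}{90}M$ unless $m\ge M$, so the intermediate value theorem applied to $y^{(4)}$ does not by itself produce a single $\xi_k$ with the constant $-\frac{1}{90}$. (This is exactly why the textbook derivation defers the single-$\xi$ form to an alternative argument.) Your fallback route is the correct repair: the Peano kernel of Simpson's rule relative to $y^{(4)}$ has constant sign on the panel, so the weighted mean value theorem for integrals yields $-\frac{h^{5}}{90}y^{(4)}(\xi_k)$ with the exact constant; an error formula based on Hermite interpolation with a doubled node at the midpoint works equally well. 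With that replacement of the flawed merging step, the rest of your argument — weight counting, averaging of the $\xi_k$, and the substitution $nh=t_f$ — completes the proof.
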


To complete the error discussion, we prove the following theorem. 

\begin{theorem}
Suppose that $f:\mathbb{R}^3\rightarrow \mathbb{R}$  
is sufficiently continuously differentiable and satisfies 
a Lipschitz condition with respect to the second argument. 
Let $x^*(\cdot)\in C^3[0,t_f]$ and $u^*(\cdot)\in C^3[0,t_f]$ 
be the optimal state and control functions, respectively, 
which minimize the performance index $J$ with optimal value 
$J_{opt}$. If $x_n(\cdot)$, $u_n(\cdot)$, and $J_n$ are the 
approximation of the optimal state function, control function, 
and performance index given by our method, then
\begin{equation*}
|J_{opt}-J_n|=O(h^3).
\end{equation*}
\end{theorem}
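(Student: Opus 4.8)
The plan is to compare $J_{opt}$ with $J_n = f(\Theta,X,U)L$ through an intermediate quantity obtained by applying the composite Simpson rule to the \emph{exact} optimal trajectory. Write $\phi(t) = f(t,x^*(t),u^*(t))$, so that $J_{opt} = \int_0^{t_f}\phi(t)\,dt$, and set $X^* = [x^*(0),x^*(h),\ldots,x^*(t_f)]^T$ and $U^* = [u^*(0),u^*(h),\ldots,u^*(t_f)]^T$. Since the weight vector $L = \frac{h}{3}[1,4,2,\ldots,2,4,1]^T$ consists precisely of the composite Simpson weights, the product $\widetilde J := f(\Theta,X^*,U^*)\,L$ is exactly the composite Simpson approximation of $\int_0^{t_f}\phi(t)\,dt$. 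Hence, by the triangle inequality,
\begin{equation*}
|J_{opt}-J_n| \;\le\; \underbrace{|J_{opt}-\widetilde J|}_{(\mathrm I)} \;+\; \underbrace{|\widetilde J - J_n|}_{(\mathrm{II})},
\end{equation*}
and it suffices to show that each term is $O(h^3)$.

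For term (I), under the stated smoothness of $f$ together with $x^*,u^*\in C^3[0,t_f]$ (strengthened as needed so that $\phi\in C^4[0,t_f]$, or else invoking the Peano-kernel form of the Simpson error valid for $C^3$ integrands), Theorem~\ref{th2} gives $|J_{opt}-\widetilde J| = O(h^4) = O(h^3)$, with the hidden constant controlled by $t_f$ and $\|\phi^{(4)}\|_\infty$ (respectively $\|\phi^{(3)}\|_\infty$). For term (II), expanding the inner products,
\begin{equation*}
|\widetilde J - J_n| \;=\; \Bigl|\bigl(f(\Theta,X^*,U^*)-f(\Theta,X,U)\bigr)L\Bigr| \;\le\; \sum_{i=0}^{n} \bigl|f(ih,x^*(ih),u^*(ih)) - f(ih,x_i,u_i)\bigr|\,|L_i|.
\end{equation*}
Because the Simpson weights are nonnegative and sum to $t_f$, i.e.\ $\sum_{i=0}^n|L_i| = t_f$, it is enough to bound the nodal discrepancies of $f$. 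The Lipschitz hypothesis in the second argument and the local Lipschitz property in the third argument coming from the assumed $C^1$-regularity of $f$ (valid on the fixed bounded set containing the ranges of $x^*,u^*$ and of their approximations) yield a constant $C$ with
\begin{equation*}
\bigl|f(ih,x^*(ih),u^*(ih)) - f(ih,x_i,u_i)\bigr| \;\le\; C\bigl(|x^*(ih)-x_i| + |u^*(ih)-u_i|\bigr).
\end{equation*}
By the error estimates already established in this section, $\sup_{t\in[0,t_f]}|x^*(t)-x_n(t)| = O(h^3)$ and $\sup_{t\in[0,t_f]}|u^*(t)-u_n(t)| = O(h^3)$; evaluating at $t=ih$, where $x_n(ih)=x_i$ and $u_n(ih)=u_i$, gives $|x^*(ih)-x_i| = O(h^3)$ and $|u^*(ih)-u_i| = O(h^3)$ uniformly in $i$. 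Therefore $|\widetilde J - J_n| \le t_f\,C\,O(h^3) = O(h^3)$, and combining (I) and (II) proves the claim.

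I expect the main obstacle to be term (II), and within it the step that identifies the method's output $x_i$ with a genuine $O(h^3)$-accurate approximation of $x^*(ih)$: this rests on the (somewhat implicit) premise of this section that solving the discrete optimality system \eqref{3.9} reproduces the nodal values of the continuous optimizer up to the interpolation error, rather than on a quantitative stability estimate for the nonlinear system. A secondary technical point is reconciling the $C^4$ regularity required by Theorem~\ref{th2} with the $C^3$ hypotheses on $x^*$ and $u^*$; since only the $O(h^3)$ rate is claimed, it is harmless either to assume the extra regularity on the data or to replace Theorem~\ref{th2} by the sharper composite Simpson estimate for $C^3$ integrands.
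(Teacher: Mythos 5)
Your argument is correct and reaches the stated $O(h^3)$ bound, but it uses a genuinely different decomposition from the paper's. The paper keeps the control fixed at its exact nodal values (it sets $u_j:=u(t_j)$, $x_j:=x_n(t_j)$) and inserts the intermediate quantity $\int_0^{t_f}f(t,x_n(t),u^*(t))\,dt$: the first difference is bounded using \emph{only} the Lipschitz condition in the second argument together with the state estimate $|x^*(t)-x_n(t)|=O(h^3)$, and the second difference is a pure composite Simpson error $O(h^4)$, because the Simpson sum of $t\mapsto f(t,x_n(t),u^*(t))$ at the nodes is exactly $f(\Theta,X,U)L=J_n$. You instead put the Simpson error on the exact trajectory, $\widetilde J=f(\Theta,X^*,U^*)L$, and then compare nodal values of $f$. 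This costs you two things the paper does not need: a Lipschitz-type bound in the \emph{third} (control) argument, which goes beyond the stated hypothesis (though it is covered by ``sufficiently continuously differentiable'' on a bounded set, as you note), and the nodal control error $|u^*(ih)-u_i|=O(h^3)$, which the paper's route avoids entirely since the control never changes inside its comparison. In exchange, your route applies Theorem~\ref{th2} to the smooth integrand $f(t,x^*(t),u^*(t))$, whereas the paper applies it to $f(t,x_n(t),u^*(t))$ with $x_n$ only piecewise quadratic, so the paper's use of the composite Simpson error is strictly speaking legitimate only panel by panel; your version also makes explicit (and correctly isolates) the identification of the computed nodal values with those of the continuous optimizer, which the paper assumes silently through its definitions $x_j:=x_n(t_j)$, $u_j:=u(t_j)$, and which is not a defect of your argument relative to the paper's. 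Your handling of the $C^4$ versus $C^3$ regularity needed for the Simpson estimate (either strengthen the data or use the $C^3$ Peano-kernel bound, both sufficient for $O(h^3)$) is also sound.
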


\begin{proof}
We put $t_j=jh$, $x_j:=x_n(t_j)$, and $u_j:=u(t_j)$. By setting 
$y(t):=f(t,x_n(t),u(t))$ and using Theorem~\ref{th2}, we have
\begin{equation}
\label{4.3}
\begin{split}
\int_0^{t_f}f(t,x_n(t),u(t))dt=\frac{h}{3}
&\left[f(0,x_0,u_0)+2\sum_{j=1}^{(n/2)-1}f(t_{2j},x_{2j},u_{2j})\right.\\
&\left.+4\sum_{j=1}^{n/2}f(t_{2j-1},x_{2j-1},u_{2j-1})+f(t_f,x_n,u_n)\right]+O(h^4)\\
&=f(\Theta,X,U)L+O(h^4).
\end{split}
\end{equation}
Since function $f$ satisfies a Lipschitz condition with respect 
to the second argument, there exists a real constant $C>0$ such that
\begin{equation}
\label{4.4}
|f(t,{\bf x}_1,u)-f(t,{\bf x}_2,u)|\leq C|{\bf x}_1-{\bf x}_2|.
\end{equation}
By assumption, we have
\begin{equation}
\label{4.1}
J_{opt}=\int_0^{t_f}f(t,x^*(t),u^*(t))dt
\end{equation}
and
\begin{equation}
\label{4.2}
J_n=f(\Theta,X,U)L.
\end{equation}
By subtracting \eqref{4.2} from \eqref{4.1} 
and using \eqref{4.3} and \eqref{4.4}, we get
\begin{equation}
\begin{split}
|J_{opt}-J_n|
&=\left|\int_0^{t_f}f(t,x^*(t),u^*(t))dt-f(\Theta,X,U)L\right|\\
&=\left|\int_0^{t_f}f(t,x^*(t),u^*(t))dt-\int_0^{t_f}f(t,x_n(t),u^*(t))dt\right.\\
&\left.+\int_0^{t_f}f(t,x_n(t),u^*(t))dt-f(\Theta,X,U)L\right|\\
&\leq \left|\int_0^{t_f}f(t,x^*(t),u^*(t))dt-\int_0^{t_f}f(t,x_n(t),u^*(t))dt\right|\\
&\quad +\left|\int_0^{t_f}f(t,x_n(t),u^*(t))dt-f(\Theta,X,U)L\right|\\
&\leq \int_0^{t_f}\left|f(t,x^*(t),u^*(t))-f(t,x_n(t),u^*(t))\right|dt+O(h^4)\\
&\leq \int_0^{t_f}C\left|x^*(t)-x_n(t)\right|dt+O(h^4)\\
&=O(h^3)+O(h^4)\\
&=O(h^3)
\end{split}
\end{equation}
The proof is complete.
\end{proof}


\section{Numerical solution of FOCPs with inequality constraints}
\label{sec:5}

In this section, we develop the method introduced in Section~\ref{sec:3} 
for solving constrained FOCPs. Consider problem \eqref{1.1}--\eqref{1.3} 
with an additional inequality constraint
\begin{equation}
\label{5.1}
H\left(t,x(t),D^{\alpha_1}x(t),\ldots,D^{\alpha_k} x(t),D^\alpha x(t),u(t)\right)
\leq 0.
\end{equation}
In order to solve this problem, we substitute the approximations given 
in \eqref{3.2}--\eqref{3.5} into \eqref{5.1} and get
\begin{equation*}\label{5.2}
\begin{split}
H\left(t,X^T\Psi(t),X_1^T\Psi(t),\ldots,X_k^T\Psi(t),A^T\Psi(t),U^T\Psi(t)\right)
\leq 0.
\end{split}
\end{equation*}
We set the following Newton--Cotes nodes as the collocation points:
\begin{equation*}
\tau_i=\frac{i+1}{2(n+1)}t_f,
\quad i=0,1,\ldots,2n.
\end{equation*}
Therefore, we get
\begin{equation}
\label{5.3}
H\left(\tau_i,X^T\Psi(\tau_i),X_1^T\Psi(\tau_i),
\ldots,X_k^T\Psi(\tau_i),A^T\Psi(\tau_i),U^T\Psi(\tau_i)\right) \leq 0, 
\quad i=0,1,\ldots,2n.
\end{equation}
Using \eqref{index}, \eqref{3.8}, and \eqref{5.3}, the constrained FOCP 
is reduced to the following nonlinear programming problem:
\begin{equation}
\label{5.4}
\begin{split}
\min~J_n&=f(\Theta,X,U)L\\
s.t.~&A^T-g(\Theta,X,X_1,\ldots,X_k,U)=0\\
&H\left(\tau_i,X^T\Psi(\tau_i),X_1^T\Psi(\tau_i),
\ldots,X_k^T\Psi(\tau_i),A^T\Psi(\tau_i),U^T\Psi(\tau_i)\right)\leq 0,
\quad i=0,1,\ldots,2n.
\end{split}
\end{equation}
It should be noted that the decision variables of the above nonlinear 
programming problem are: $a_i$ and $u_i$, $i=0,1,\ldots,n$. By employing 
a standard optimization solver, problem \eqref{5.4} can be easily solved
(see Section~\ref{sec:6}). After finding the optimal value of the decision 
variables, the approximations of the state and control functions are given, 
respectively, by \eqref{3.5} and \eqref{3.3}. Moreover, we get an approximation 
of the optimal performance index by substituting the results in \eqref{index}.


\section{Illustrative examples}
\label{sec:6}

In this section, we apply our method to some FOCPs 
and compare the results with the ones obtained by 
existing methods in the literature. To this aim, 
the $l_2$ norm of the error and the convergence order 
of state and control functions are defined, 
respectively, by
\begin{equation*}
E_n(x)=\left(\frac{1}{n}\sum_{i=1}^n(x(t_i)-x_n(t_i))^2\right)^{\frac{1}{2}},
\quad \epsilon_n(x)=\log_2\left(E_n(x)/E_{2n}(x)\right)
\end{equation*}
and
\begin{equation*}
E_n(u)=\left(\frac{1}{n}\sum_{i=1}^n(u(t_i)-u_n(t_i))^2\right)^{\frac{1}{2}},
\quad \epsilon_n(u)=\log_2\left(E_n(u)/E_{2n}(u)\right),
\end{equation*}
where $t_i=ih$, $x(t_i)$, and $u(t_i)$ are the exact state and control 
functions at $t_i$ and $x_n(t_i)$ and $u_n(t_i)$ are the obtained state 
and control functions by the proposed method at $t_i$.

In our implementation, the method was carried out using \textsf{Mathematica 11.3}.  
For solving the resulting systems of algebraic equations, the function 
\textsf{FindRoot} was used in two examples, which are non-constrained FOCPs. 
Moreover, in the case of constrained FOCP, the function \textsf{Minimize} 
was employed for solving the linear programming problem. In order to have a 
comparison with other methods, we report the CPU time (seconds) in some of the examples, 
which have been obtained on a 2.5 GHz Core i7 personal computer with 16~GB of RAM.


\begin{example}
\label{ex1}
As the first example, we consider the following FOCP 
borrowed from \cite{Salati}:
\begin{equation}
\label{6.1}
\begin{split}
\min~J
&=\int_0^{20}\left[1-\left(x(t)-0.01t^2-1\right)^2+u(t)-2\sqrt{\pi}J_0(4\sqrt{\pi})\right]^2dt\\
s.t. ~~&D^{0.5}x(t)=-\left(x(t)-0.01t^2-1\right)^2+u(t)+1+\frac{2}{75\sqrt{\pi}}t^{3/2},\\
&x(0)=1.
\end{split}
\end{equation}
The optimal value for the performance index is $J=0$, which is obtained by  
\begin{equation*}
\begin{split}
&u(t)=-\cos^2(4\sqrt{t})+2\sqrt{\pi}J_0(4\sqrt{t}),\\
&x(t)=\sin(4\sqrt{t})+0.01t^2+1,
\end{split}
\end{equation*}
where $J_0$ is the first kind Bessel function of order zero. 
This problem has been solved using the method we propose here,
with different values of $n$. By applying the method with $n$ 
equal to some powers of $2$, we report in Table~\ref{tab:1}
the error of our numerical results in the $l_2$ norm 
and the order of convergence. Table~\ref{tab:1} confirms 
that even when the state and control functions do not belong 
to $C^3[0,20]$ (as considered by Theorem~\ref{th1}), 
our method has high convergence order. 

In \cite{Salati}, problem \eqref{6.1} is considered 
with the boundary condition
\begin{equation*}
x(20)=5+\sin(8\sqrt{5}).
\end{equation*}
This problem is solved in \cite{Salati} by approximating the fractional 
integral by the Gr\"unwald--Letnikov formula (direct GL method), 
trapezoidal formula (direct TR method), and Simpson formula (direct SI method). 
A comparison between the results obtained by our method and the aforementioned 
methods given in \cite{Salati} is displayed in Table~\ref{tab:2}. The results 
show that our method has the same accuracy as the direct SI method. 
Furthermore, the approximate and exact state and control functions obtained 
by the present method with $n=50$ and $n=100$, together with the corresponding 
error functions, are plotted in Figure~\ref{fig:1}.
\begin{table}[!ht]
\centering
\caption{Example~\ref{ex1}: error and convergence order 
for the state and control functions.}\label{tab:1}
\begin{tabular}{lllllll} \hline
&  &\multicolumn{2}{c}{$x(t)$} &&\multicolumn{2}{c}{$u(t)$}  \\
\cline{3-4}\cline{6-7}
$n$ &   & $E_n(x)$ &$\epsilon_n(x)$ && $E_n(u)$& $\epsilon_n(u)$\\
\hline
$8$ & & $1.23e+0$ & $2.34$ && $3.10e+0$ & $3.63$\\
$16$ &   & $2.43e-1$& $3.09$  && $2.51e-1$ & $3.56$\\
$32$ &   &$2.86e-2$ &$3.41$& &$2.13e-2$ & $2.44$\\
$64$ &   &$2.68e-3$ &$3.50$&& $3.92e-3$  & $3.37$\\
$128$ &   &$2.36e-4$& $3.52$ & & $3.79e-4$ & $3.57$\\
$256$ &   &$2.06e-5$ &---&& $3.18e-5$  & ---\\
\hline
\end{tabular}
\end{table}
\begin{table}[!ht]
\scriptsize
\centering
\caption{Example~\ref{ex1}: comparison of the numerical results 
obtained by the present method versus methods of \cite{Salati}.}\label{tab:2}
\begin{tabular}{llllllllllll} \hline
&  \multicolumn{2}{c}{Direct GL method \cite{Salati}} 
&&\multicolumn{2}{c}{Direct TR method \cite{Salati}}
&&\multicolumn{2}{c}{Direct SI method \cite{Salati}} 
&& \multicolumn{2}{c}{Present method} \\
\cline{2-3}\cline{5-6}\cline{8-9}\cline{11-12}
$n$ &    $E_n(x)$ &$E_n(u)$ && $E_n(x)$&$E_n(u)$&&$E_n(x)$&$E_n(u)$&& $E_n(x)$&$E_n(u)$\\ \hline
$100$ &  $1.11e-1$ & $1.68e-1$ && $1.48e-2$ & $2.07e-2$&& $5.60e-4$ & $8.99e-4$ &&$5.63e-4$ &$9.03e-4$ \\
$200$ &  $5.71e-2$ & $9.19e-2$ && $3.71e-3$ & $5.21e-3$&& $4.91e-5$ & $7.66e-5$ &&$4.92e-5$ &$7.68e-5$ \\
$300$ &  $3.94e-2$ & $6.37e-2$ && $1.65e-3$ & $2.32e-3$&& $1.18e-5$ & $1.80e-5$ &&$1.18e-5$ &$1.80e-5$ \\ 
\hline
\end{tabular}
\end{table}
\begin{figure}[!ht]
\centering
\includegraphics[scale=0.48]{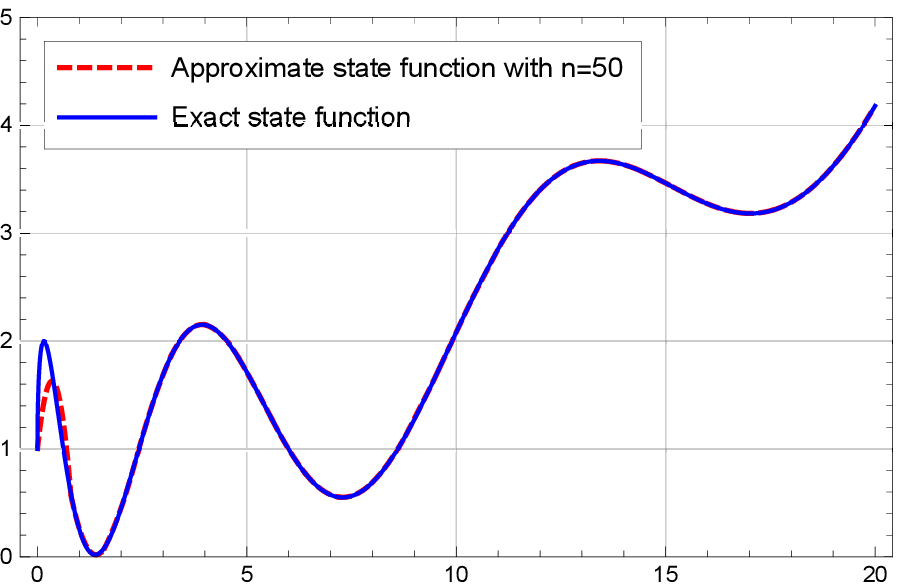}
\includegraphics[scale=0.48]{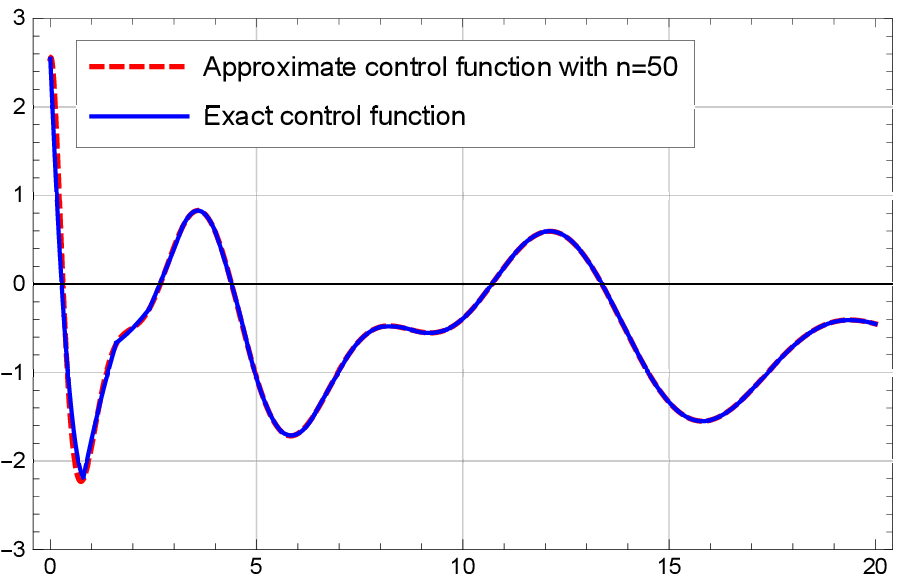}
\includegraphics[scale=0.49]{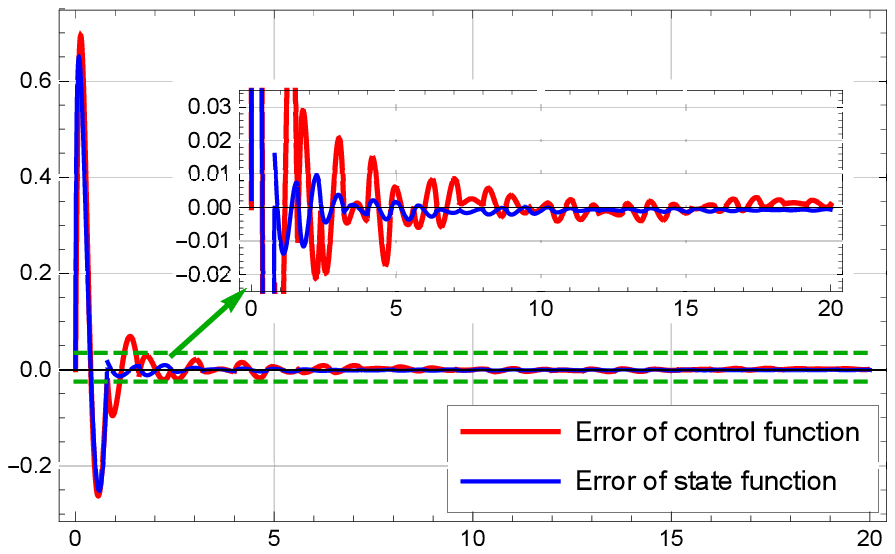}
\includegraphics[scale=0.48]{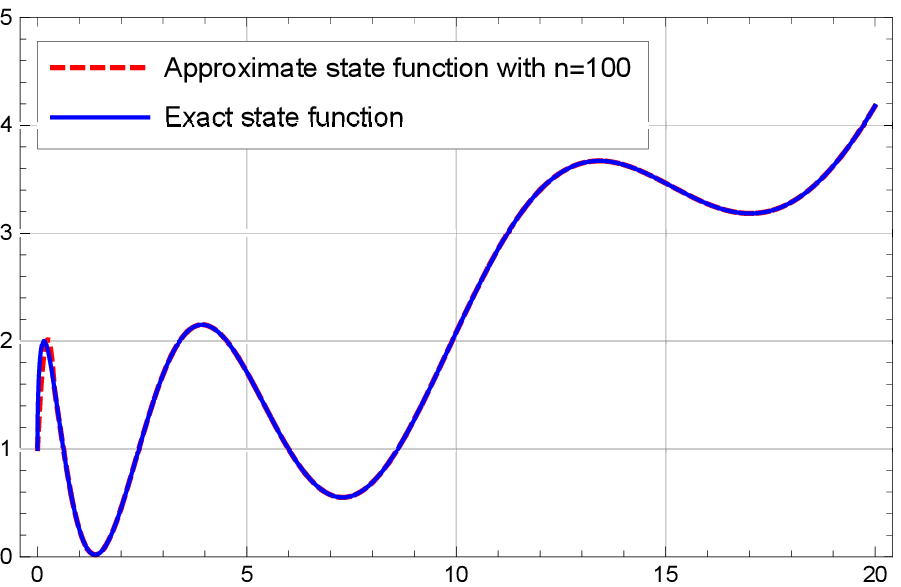}
\includegraphics[scale=0.48]{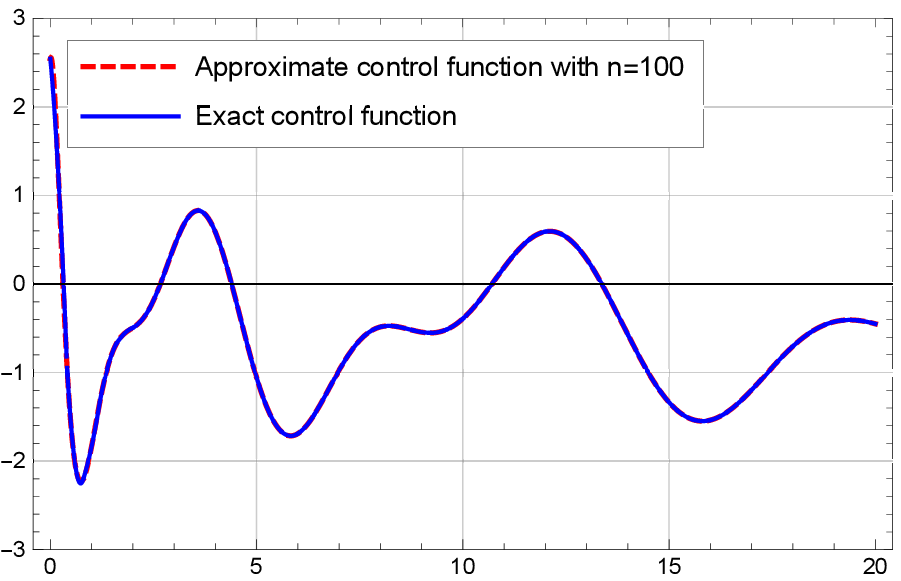}
\includegraphics[scale=0.49]{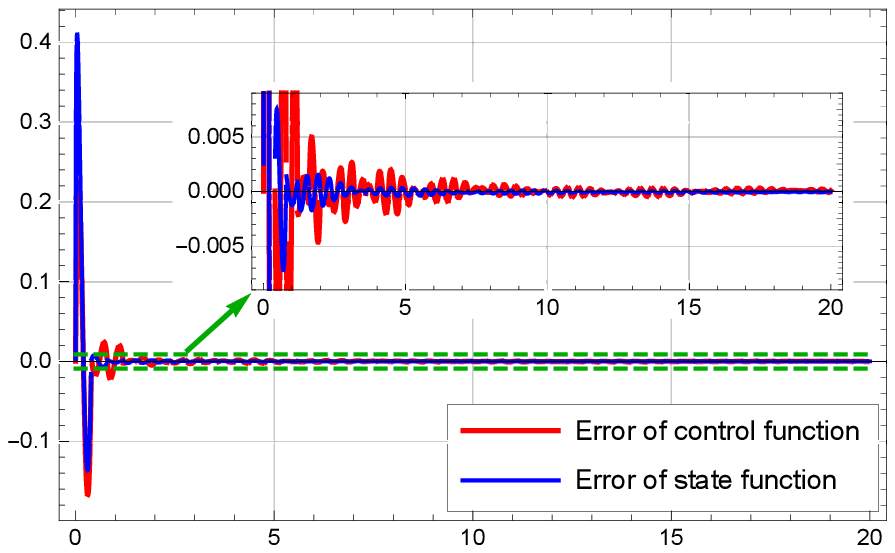}
\caption{Example~\ref{ex1}: comparison of the exact and numerical 
solutions along with error functions. Above: $n=50$. Bellow: $n=100$.}
\label{fig:1}
\end{figure}
\end{example}


\begin{example}
\label{ex2}
Consider the following constrained FOCP \cite{Alipour,Mashayekhi,Yonthanthum}:
\begin{equation}
\label{6.2}
\begin{split}
\min~&J=\int_0^1(-\ln 2)x(t)dt\\
s.t.~&D^\alpha x(t)=(\ln 2)(x(t)+u(t)),\\
&x(0)=0,\\
&\left|u(t)\right|\leq 1,\\
&x(t)+u(t)\leq 2.
\end{split}
\end{equation}
In this problem, we have $t_f=1$. By taking $n=2$,  
the value of $h$ is given as $h=\frac{1}{2}$. We set
\begin{equation}
\label{6.3}
D^\alpha x(t)=A^T\Psi(t),\quad u(t)=U^T\Psi(t),
\end{equation}
where
\begin{equation*}
A=[a_0,a_1,a_2]^T,\quad U=[u_0,u_1,u_2]^T,
\quad \Psi(t)=[\psi_0(t),\psi_1(t),\psi_2(t)]^T,
\end{equation*}
with unknown parameters $a_i$, $u_i$, $i=0,1,2$, and
\begin{equation*}
\psi_0(t)=\left\{\begin{array}{ll} 2t^2-3t+1,&0\leq t\leq 1,\\
&\\
0,&\text{otherwise},
\end{array}\right.
\end{equation*}
\begin{equation*}
\psi_1(t)=\left\{\begin{array}{ll}-4t^2+4t,&0\leq t\leq 1,\\
&\\
0,&\text{otherwise},
\end{array}\right.
\end{equation*}
\begin{equation*}
\psi_2(t)=\left\{\begin{array}{ll}2t^2-t,&0\leq t\leq 1,\\
&\\
0,&\text{otherwise}.
\end{array}\right.
\end{equation*}
Taking into account the initial condition 
in \eqref{6.2}, we obtain that
\begin{equation}
\label{6.4}
x(t)=A^TP^{(\alpha)}\Psi(t),
\end{equation}
where
\begin{equation*}
P^{(\alpha)}=\left[\begin{array}{ccc}
0 & p_{01} & p_{02}\\
0 & p_{11} & p_{12}\\
0 & p_{21} & p_{22}\\
\end{array}
\right].
\end{equation*}
By substituting \eqref{6.4} into the performance index $J$ 
and using \eqref{index}, we get
\begin{equation}
\label{6.5}
J=\frac{-\ln 2}{6}\left(4 a_0 p_{01}+a_0 p_{02}
+4 a_1 p_{11}+a_1 p_{12}+4 a_2 p_{21}+a_2 p_{22}\right).
\end{equation}
Moreover, using \eqref{6.3} and \eqref{6.4}, 
the dynamic system in \eqref{6.2} is reduced to
\begin{equation*}
A^T-(\ln 2)(A^TP^{(\alpha)}+U^T)=0,
\end{equation*}
which gives the following system of equations:
\begin{equation}\label{6.6}
\begin{split}
&a_0- \ln 2 u_0=0,\\
&a_1-\ln 2 \left(a_0 p_{01}+a_1 p_{11}+a_2 p_{21}+u_1\right)=0,\\
&a_2-\ln 2 \left(a_0 p_{02}+a_1 p_{12}+a_2 p_{22}+u_2\right)=0.\\
\end{split}
\end{equation}
Finally, we take into account the inequality constraints in \eqref{6.2} 
at $\tau_i=\frac{i+1}{6}$, $i=0,1,\ldots,4$, to obtain
\begin{equation}
\label{6.7}
\begin{split}
&\left|\frac{5 u_0}{9}+\frac{5 u_1}{9}-\frac{u_2}{9}\right|\leq 1,\\
&\left|\frac{2 u_0}{9}+\frac{8 u_1}{9}-\frac{u_2}{9}\right|\leq 1,\\
&\left|u_1\right|\leq 1,\\
&\left|-\frac{u_0}{9}+\frac{8 u_1}{9}+\frac{2 u_2}{9}\right|\leq 1,\\
&\left|-\frac{u_0}{9}+\frac{5 u_1}{9}+\frac{5 u_2}{9}\right|\leq 1,\\
&\frac{1}{9} \left(5 a_0 p_{01}-a_0 p_{02}+5 a_1 p_{11}
-a_1 p_{12}+5 a_2 p_{21}-a_2 p_{22}+5 u_0+5 u_1-u_2\right)\leq 2,\\
&\frac{1}{9} \left(8 a_0 p_{01}-a_0 p_{02}+8 a_1 p_{11}
-a_1 p_{12}+8 a_2 p_{21}-a_2 p_{22}+2 u_0+8 u_1-u_2\right)\leq 2,\\
&a_0 p_{01}+a_1 p_{11}+a_2 p_{21}+u_1\leq 2,\\
&\frac{1}{9} \left(2 \left(4 a_0 p_{01}+a_0 p_{02}
+4 a_1 p_{11}+a_1 p_{12}+4 a_2 p_{21}+a_2 p_{22}
+4 u_1+u_2\right)-u_0\right)\leq 2,\\
&\frac{1}{9} \left(5 \left(a_0 p_{01}+a_0 p_{02}
+a_1 p_{11}+a_1 p_{12}+a_2 p_{21}+a_2 p_{22}+u_1
+u_2\right)-u_0\right)\leq 2.
\end{split}
\end{equation}
In summary, the problem is reduced to the minimization of \eqref{6.5} 
subject to the conditions given in \eqref{6.6} and \eqref{6.7}. 

In the particular case when $\alpha=1$, the exact solution 
of the problem is given by
\begin{equation*}
x(t)=2^t-1,\quad u(t)=1,
\end{equation*}
and the optimal value of the performance index with seven significant digits 
is $J=-0.3068528$. By considering $\alpha=1$, the operational matrix 
of integration is given by
\begin{equation*}
P^{(1)}=\left[
\begin{array}{ccc}
 0 & \frac{5}{24} & \frac{1}{6} \\
 &&\\
 0 & \frac{1}{3} & \frac{2}{3} \\
 &&\\
 0 & -\frac{1}{24} & \frac{1}{6} \\
\end{array}
\right].
\end{equation*}
Therefore, by substituting the elements of $P^{(1)}$ into \eqref{6.5}--\eqref{6.7}, 
the following linear programming problem is produced:
\begin{eqnarray*}
\min~&\frac{-\ln 2}{6}\left(a_0+2 a_1\right),\\
s.t.~&a_0-u_0 \ln 2=0,\\
&a_1-\ln 2 \left(\frac{5 a_0}{24}+\frac{a_1}{3}-\frac{a_2}{24}+u_1\right)=0,\\
&a_2-\ln 2 \left(\frac{a_0}{6}+\frac{2 a_1}{3}+\frac{a_2}{6}+u_2\right)=0,\\
&\left|\frac{5 u_0}{9}+\frac{5 u_1}{9}-\frac{u_2}{9}\right|\leq 1,\\
&\left|\frac{2 u_0}{9}+\frac{8 u_1}{9}-\frac{u_2}{9}\right|\leq 1,\\
&\left|u_1\right|\leq 1,\\
&\left| -\frac{u_0}{9}+\frac{8 u_1}{9}+\frac{2 u_2}{9}\right|\leq 1,\\
&\left|-\frac{u_0}{9}+\frac{5 u_1}{9}+\frac{5 u_2}{9}\right|\leq 1,\\
&\frac{1}{72} \left(7 a_0+8 a_1-3 a_2+40 u_0+40 u_1-8 u_2\right)\leq 2,\\
&\frac{1}{18} \left(3 a_0+4 a_1-a_2+4 u_0+16 u_1-2 u_2\right)\leq 2,\\
&\frac{1}{24} \left(5 a_0+8 a_1-a_2+24 u_1\right)\leq 2,\\
&\frac{1}{9} \left(2 a_0+4 a_1-u_0+8 u_1+2 u_2\right)\leq 2,\\
&\frac{1}{72} \left(15 a_0+40 a_1+5 a_2-8 u_0+40 u_1+40 u_2\right)\leq 2.
\end{eqnarray*}

By solving this problem, we find
\begin{equation*}
a_0=0.6931472,~a_1=0.9795332,~a_2=1.3859775, u_0=u_1=u_2=1.
\end{equation*}
Substituting $u_i=1$, $i=0,1,2$, into \eqref{6.3}, the exact control function is obtained.
The approximate and exact state functions are plotted in Figure~\ref{fig:2}. 
In this case, the approximate value of the performance index is $J_2=-0.3063957$.

We have employed our method for solving problem \eqref{6.2} with different values of $n$. 
For all the considered values of $n$, the method gives the exact control function. 
Table~\ref{tab:3} displays the numerical results for $x(t)$, $\epsilon_n(x)$, $J$, 
and the CPU time consumed for solving the resulting linear programming problem. 
It is seen that the present method gives the exact solution of $J$, with seven 
significant digits, with $n=32$ and that the convergence order of the state 
function is $O(h^4)$. In Table~\ref{tab:3}, the method of \cite{Yonthanthum} 
refers to the hybrid block-pulse and Taylor polynomials method, the
method of \cite{Mashayekhi} denotes the method of hybrid block-pulse 
and Bernoulli polynomials, and the method of \cite{Alipour} refers 
to the Bernstein polynomials method. 
\begin{figure}[!ht]
\centering
\includegraphics[scale=1]{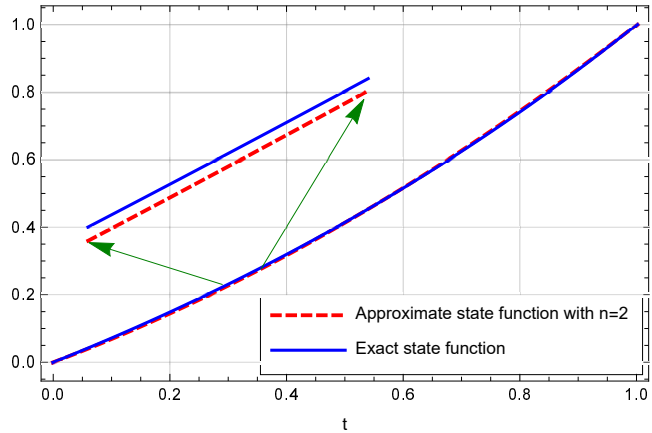}
\caption{Example~\ref{ex2}: comparison of the exact and numerical 
state functions with $n=2$ and $\alpha=1$.}\label{fig:2}
\end{figure}
\begin{table}[!ht]
\scriptsize
\centering
\caption{Numerical results for Example~\ref{ex2}.}\label{tab:3}
\begin{tabular}{llcccc}
\hline
Methods & &$E_n(x)$&$\epsilon_n(x)$& $J$ & CPU Time  \\ \hline
Present   &$n=2$ &$8.07e-4$ & $4.02$&$-0.3063957$&  $0.015$\\
method    &$n=4$ & $4.99e-5$& $4.01$ &$-0.3068248$ & $0.015$\\
          &$n=8$& $3.09e-6$ &  $4.01$ & $-0.3068511$& $0.015$\\
          &$n=16$& $1.92e-7$ &  $4.00$ &$-0.3068527$ &$0.032$\\
          &$n=32$& $1.20e-8$ &   ---    & $-0.3068528$&$0.032$\\ \hline
Method of \cite{Yonthanthum}&$M=3,$ $N=1$  & &&$-0.30683$ & $40.091$\\ \hline
Method of \cite{Mashayekhi} & $M=3,$ $N=1$& && $-0.30684$ & $53.133$\\ \hline
Method of \cite{Alipour} & $M=3$ & && $-0.30685$ &\\ \hline
\end{tabular}
\end{table}
\end{example}


\begin{example}
\label{ex3}
Let us now consider the following nonlinear FOCP \cite{Lotfi,Keshavarz}:
\begin{equation*}
\begin{split}
\min~&J=\int_0^1\left[ e^t\left(x(t)-t^4+t-1\right)^2+(1+t^2) \left(u(t) + 1- t + t^4 -\frac{8000t^{\frac{21}{10}}}{77\Gamma(\frac{1}{10})}\right)^2\right]dt,\\
s.t.~&D^{1.9}x(t)=x(t)+u(t),\\
&x(0)=1,\quad x'(0)=-1.
\end{split}
\end{equation*}
The state and control functions that minimize the performance index $J$ are given by 
\begin{equation*}
\begin{split}
&x(t)=1-t+t^4,\\
&u(t)= -1+ t - t^4 +\frac{8000}{77\Gamma(\frac{1}{10})}t^{\frac{21}{10}},
\end{split}
\end{equation*}
with optimal value $J_{opt}=0$. By applying our method with different values of $n$, 
we obtain the numerical results displayed in Table~\ref{tab:4}. In this table, 
we give a comparison between the results for $J$ and CPU times consumed 
for solving the final algebraic system resulted by our method, 
the method of \cite{Lotfi}, which uses Legendre polynomials as the 
basis functions and the Gauss quadrature rule for integrating the FOCP, 
and also the method of \cite{Keshavarz}, which employs Bernoulli polynomials 
and the Gauss quadrature rule for integrating the FOCP. In \cite{Lotfi} 
and \cite{Keshavarz}, $m$ refers to the maximum order of the
Legendre and Bernoulli polynomials, respectively, that have been used 
for approximating the state and control functions. It can be seen from 
Table~\ref{tab:4} that the method we suggest here approximates 
the state and control functions with convergence of order $O(h^3)$. 
In Figure~\ref{fig:3} (left), the results for $E_n(x)$, $E_n(u)$, 
and $E_n(J)=|J_{opt}-J_n|$, obtained by employing our method 
for some selected values of $n$, are plotted in a logarithmic scale. 
This figure confirms the $O(h^3)$ accuracy order of the state and control functions. 
It also shows that the accuracy order of the performance index is higher 
than $O(h^3)$. Moreover, the CPU times of the method are plotted 
in Figure~\ref{fig:3} (right).
\begin{table}[!ht]
\scriptsize
\centering
\caption{Numerical results for Example~\ref{ex3}.}\label{tab:4}
\begin{tabular}{llcccclc} \hline
Methods & &$E_n(x)$&$\epsilon_n(x)$&$E_n(u)$&$\epsilon_n(u)$& $J$ & CPU Time  \\
\hline
Present   &$n=4$ & $7.10e-4$& $3.39$ &$2.98e-4$ & $3.03$&$9.64314e-7$ & $0.000$\\
method    &$n=8$& $6.75e-5$ & $3.33$ &$3.65e-5$ & $3.15$& $1.00418e-8$& $0.015$\\
          &$n=16$& $6.69e-6$& $3.28$ &$4.10e-6$ & $3.18$&$1.06677e-10$ &$0.046$\\
          &$n=32$& $6.91e-7$&  $3.22$    &$4.52e-7$ & $3.17$& $1.19487e-12$&$0.109$\\
          &$n=64$& $7.42e-8$&   $3.18$   &$5.03e-8$ & $3.15$& $1.41601e-14$&$0.765$\\
          &$n=128$& $8.20e-9$&  $3.15$    &$5.66e-9$ & $3.14$& $1.75827e-16$&$4.922$\\
          &$n=256$& $9.24e-10$&   ---    &$6.44e-10$ & ---& $2.25012e-18$&$21.282$\\ \hline
Method of \cite{Lotfi} & $m=4$   & & &&&$5.42028e-7$ & $0.141$\\
&$m=8$&&&&&$8.22283e-10$&$0.296$\\ \hline
Method of \cite{Keshavarz} & $m=4$ & && && $5.16864e-7$ &$0.078$\\
&$m=8$&&&&&$4.23025e-11$&$0.171$\\ \hline
\end{tabular}
\end{table}
\begin{figure}[!ht]
\centering
\includegraphics[scale=0.7]{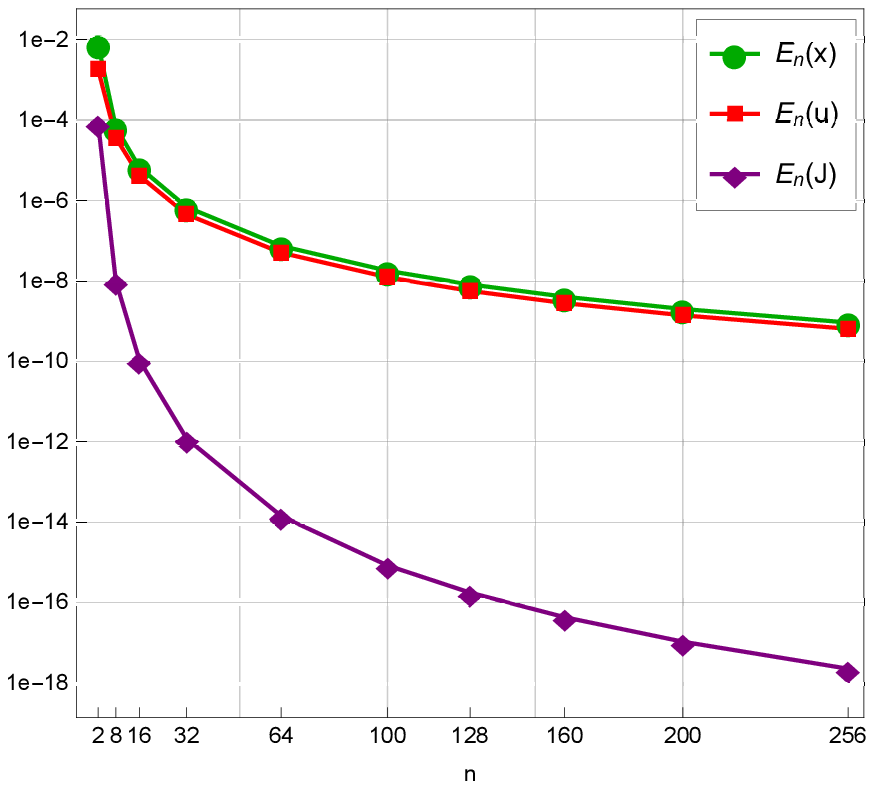}
\includegraphics[scale=0.7]{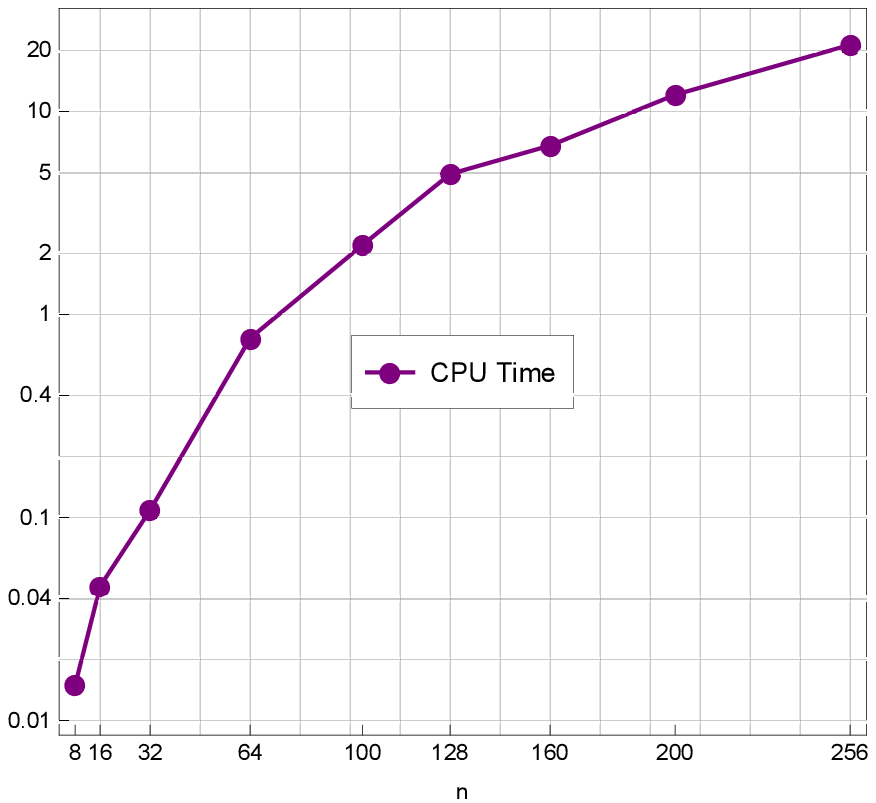}
\caption{Numerical results of Example~\ref{ex3} 
for some selected $n$: the measured $E_n(x)$, 
$E_n(u)$, and $E_n(J)$ in logarithmic scale (left); 
CPU time (right)}\label{fig:3}
\end{figure}
\end{example}


\section{Conclusion}
\label{sec:7}

We introduced a new numerical approach for solving fractional 
optimal control problems (FOCPs). Our scheme uses modified hat 
functions as basis functions and gives approximations of the control 
and state variables as linear combinations of such basis functions. 
The properties of the basis functions, together with those of the
Caputo derivative and Riemann--Liouville integral, allow us to 
reduce the FOCP to a system of nonlinear algebraic equations, 
which greatly simplifies the problem. An error estimate of 
the method is proved. Moreover, a generalization  
is given for solving constrained FOCPs. The method is employed 
for solving three illustrative examples and the obtained results 
confirm the efficiency, accuracy, and high performance of our technique
when compared with state of the art numerical schemes available
in the literature. We claim that our method can be very useful in real applications.
We are currently investigating its application to human respiratory 
syncytial virus infection and to the model introduced in \cite{MR3872489}.
Another direction of future research consists
to generalize our method to variable-order FOCPs,
which is an area under development \cite{MR3822307,MR3885740}. 


\section*{Acknowledgments}

Lima acknowledges support from 
Funda\c{c}\~ao para a Ci\^encia e a Tecnologia
(FCT, the Portuguese Foundation for Science
and Technology) through project UID/MAT/04621/2019;
Torres was supported by FCT
within project UID/MAT/04106/2019 (CIDMA).
The authors are grateful to an anonymous reviewer 
for suggestions to improve the article.


\section*{References}


\end{document}